\newtheoremstyle{mystyle}
  {}
  {}
  {\itshape}
  {}
  {\bfseries}
  {.}
  { }
  {}
\theoremstyle{mystyle}
\newtheorem{Thm}{Theorem}[section]
\newtheorem{Def}[Thm]{Definition}
\newtheorem{Lem}[Thm]{Lemma}
\newtheorem{Prop}[Thm]{Proposition}
\newtheorem{Ex}[Thm]{Example}
\newtheorem{Rem}[Thm]{Remark} 
\newtheorem{Ass}[Thm]{Assumption}
\newbox\dottedarrow@box
\newcommand*\dottedarrow
\newcommand*\dottedarrow@t[1][1.5em]
\newcommand*\dottedarrow@m[1][]
\relax\detokenize{#1}\relax
\title{\LARGE \bf
On topological properties of compact attractors on Hausdorff spaces
}
\author{Wouter Jongeneel
\thanks{\today |The author is with the Risk Analytics and Optimization Chair, \'Ecole Polytechnique F\'ed\'erale de Lausanne, contact: \textit{wouter.jongeneel@epfl.ch}. This research was supported by the Swiss National Science Foundation under the NCCR \textit{Automation}, grant agreement 51NF40\_180545. The author is grateful to Matthew D. Kvalheim, Emmanuel Moulay and all reviewers for feedback and inspiration.} 
}
\begin{document}

\maketitle
\thispagestyle{empty}
\pagestyle{empty}

\begin{abstract}
We characterize when a compact, invariant, asymptotically stable attractor on a locally compact Hausdorff space is a strong deformation retract of its domain of attraction.
\end{abstract}

\section{Introduction}
\label{sec:intro}
The purpose of this note is to improve our understanding of topological properties of compact asymptotically stable attractors and their respective domain of attraction. Here, we will almost exclusively appeal to topological tools pioneered by Borsuk~\cite{ref:dydak2012ideas}. In particular, we will elaborate on the \textit{retraction theoretic} work by Moulay \& Bhat~\cite{ref:moulay2010topological}, which itself is a generalization of the seminal works~\cite[Thm.~21]{ref:sontag2013mathematical}, ~\cite{BhatBernstein} and~\cite[Thm.~4.1]{ref:bhatia1967asymptotic}.   

After Poincar\'e and Lyapunov (Liapunov), the modern qualitative study of attractors was largely propelled through the monographs by Birkhoff~\cite{ref:birkhoff1927dynamical} and Nemytskii \& Stepanov~\cite{ref:nemytskii}, with influential follow-up works by~Auslander, Bhatia \& Siebert~\cite{ref:auslanderbhatiasiebert1967asymptotic},  Wilson~\cite{ref:wilson1967structure}, Hahn~\cite{hahn1967stability}, Bhatia \& Szeg\"o~\cite{ref:bhatia1970stability}, Conley~\cite{ref:conley1978isolated}, Milnor~\cite{milnor1985concept} and many others, \textit{e.g.}, see~\cite[Ch.~1]{ref:jongeneel2023topological}.

Lately, attractors have been extensively studied through the lens of \textit{shape theory},~\textit{e.g.}, see~\cite{ref:garay1991strong,ref:giraldo2001some,ref:giraldo2009singular} and \cite[Prop.~1]{ref:kvalheim2022necessary}, with the seminal work of G\"unther \& Segal showing that a finite-dimensional compact subset of a manifold can be an attractor if and only if it has the shape of a finite polyhedron~\cite{ref:gunther1993every}. 

The interest in understanding topological properties of attractors and their respective domain of attraction stems from the simple observation that if a certain dynamical system does not exist, then certainly there is no feedback law resulting in a closed-loop dynamical system with precisely those dynamics.  

Indeed, this type of study often provides \textit{necessary} conditions of the form that an attractor must be \textit{equivalent} in some sense to its domain of attraction. With that in mind, one seeks a notion of equivalence that is weak enough to cover many dynamical systems, yet also strong enough to obtain insights,~\textit{e.g.}, obstructions. Hence, although shape equivalence is more widely applicable~\cite{ref:kapitanski2000shape} and in that sense more fundamental, we focus on \textit{homotopy equivalence} with the aim of recovering stronger necessary conditions.   

In the same spirit, by further restricting the problem class, one could even look for stronger notions of equivalence as recently done in~\cite{ref:yao2023domain}. There, in the context of a vector-field guided path-following problem, homotopy equivalences have been strengthened to topological equivalences (homeomorphisms).

Although we focus on continuous dynamical systems, one can link work of this form to families of differential inclusions~\cite{ref:mayhew2011topological}. Indeed, further partial generalizations of~\cite{ref:moulay2010topological} to nonsmooth dynamical system are presented in~\cite{ref:li2012morse}. 

\textit{Notation and technical preliminaries}:~The \textit{identity map} on a space $X$ is denoted by $\mathrm{id}_X$, that is, $\mathrm{id}_X:x\mapsto x$ $\forall x\in X$. The (embedded) $n$-sphere is the set $\mathbb{S}^n:=\{x\in \mathbb{R}^{n+1}:\langle x,x\rangle=1\}$, with the (closed) $n$-disk being $\mathbb{D}^n:=\{x\in \mathbb{R}^{n}:\langle x,x\rangle\leq 1\}$. The topological boundary of a space $X$ is denoted by $\partial X$,~\textit{e.g.},~$\partial \mathbb{D}^{n+1}=\mathbb{S}^n$. We use $\simeq_h$ to denote homotopy (equivalence), see Section.~\ref{sec:retraction}.  

A topological space $X$ is said to be a \textit{locally compact Hausdorff} space when: (i) for any $x\in X$ there is a compact set $K\subseteq X$ containing an open neighbourhood $U$ of $x$ and; (ii) for any $x_1,x_2\in X$ there are open neighbourhoods $U_1\ni x_1$ and $U_2\ni x_2$ such that $U_1\cap U_2 = \emptyset$,~\textit{e.g.}, see~\cite[p.~31, 104]{Lee1}. Examples of locally compact Hausdorff spaces are: $\mathbb{R}^n$, topological manifolds, the Hilbert cube, any discrete space and so forth. In particular, any compact Hausdorff space is locally compact. Regarding counterexamples, a space $X$ with the trivial topology $\tau=\{X,\emptyset\}$ is not Hausdorff and any infinite-dimensional Hilbert space is a Hausdorff topological vector space, yet, it fails to be locally compact, see also~\cite[Thm.~29.1]{munkres2014pearson}.

\section{Continuous dynamical systems}

In this note we study continuous (global) \textit{semi-dynamical systems} comprised of the triple $\Sigma:=(\mathsf{M},\varphi,\mathbb{R}_{\geq 0})$. Here,  $\mathsf{M}$ is a locally compact Hausdorff space and $\varphi:\mathbb{R}_{\geq 0}\times \mathsf{M}\to \mathsf{M}$ is a (global) semi-flow, that is, a continuous map that satisfies for any $x\in \mathsf{M}$:
\begin{enumerate}[(i)]
    \item $\varphi(0,x)=x$ (\textit{identity} axiom); and
    \item $(\varphi(s,\varphi(t,x))=\varphi(t+s,x)$ $\forall s,t \in \mathbb{R}_{\geq 0}:=\{t\in \mathbb{R}:t\geq 0\}$ (\textit{semi-group} axiom).
\end{enumerate}
We will usually write $\varphi^t$ instead of $\varphi(t,\cdot)$. 

We say that a point $x\in \mathsf{M}$ is a \textit{start point} (under $\Sigma$) if $\forall (t,y)\in \mathbb{R}_{>0}\times \mathsf{M}$ we have that $\varphi^t(y)\neq x$. Differently put, $x\in \mathsf{M}$ is a start point when a flow starting from $x$ cannot be extended backwards, see~\cite[Ex.~5.14]{ref:bhatiahajek2006local} for an example. To avoid confusion, the evaluation of an integral curve at $0$ is sometimes called a ``\textit{starting point}''~\cite[p.~206]{Lee2}, which is not what we are talking about here. Then, to eliminate the existence of start points we appeal to~\cite[Prop.~1.7]{ref:bhatiahajek2006local}, for instance, we can consider semi-flows generated by a smooth vector field. Concretely, let $F\in \Gamma^{\infty}(T\mathsf{M})$ be a smooth vector field on a smooth manifold $\mathsf{M}$. It is well-known that under these conditions, for each $p\in \mathsf{M}$ there is a $\varepsilon>0$ such that $\gamma:(-\varepsilon,\varepsilon)\to \mathsf{M}$ is an integral curve of $F$ with $\gamma(0)=p$~\cite[Prop.~9.2]{Lee2}, that is, in terms of the (local) flow $\varphi:(-\varepsilon,\varepsilon)\times \mathsf{M}\to \mathsf{M}$ we have
\begin{equation*}
    \left.\frac{\mathrm{d}}{\mathrm{d}t}\varphi^t(p)\right|_{t=s}=F\left(\varphi^s(p)\right),\quad s\in (-\varepsilon,\varepsilon).  
\end{equation*}
Hence, with the previous observation in mind we will assume the following throughout the remainder of the note. 
\begin{Ass}[Start points]
\label{ass:starting:points}
The set of start points under the semi-dynamical system $\Sigma$ is empty. 
\end{Ass} 

\subsection{Stability}
We will exclusively focus on a subclass of semi-dynamical systems with practically relevant stability properties. 

\begin{Def}[Attractor]
\label{def:attractor}
Given some global semi-dynamical system $\Sigma=(\mathsf{M},\varphi,\mathbb{R}_{\geq 0})$, then, a compact set $A\subseteq \mathsf{M}$ is said to be an invariant, local asymptotically stable, attractor when
\begin{enumerate}[(i)]
    \item $\varphi(\mathbb{R},A)=A$ (\textit{invariance}); and
    \item \label{local:stab}  for any open neighbourhood $U_{\varepsilon}\subseteq \mathsf{M}$ of $A$ there is an open neighbourhood $U_{\delta}\subseteq U_{\varepsilon}\subseteq \mathsf{M}$ of $A$ such that $\varphi(\mathbb{R}_{\geq 0},U_{\delta})\subseteq U_{\varepsilon}$~(\textit{Lyapunov stability}), plus, there is an open neighbourhood $W\subseteq \mathsf{M}$ of $A$ such that all semi-flows intialized in $W$ converge to $A$~(\textit{local attractivity}), that is, for any $p\in W$ and any open neighbourhood $V\subseteq \mathsf{M}$ of $A$ there is a $T\geq 0$ such that $\varphi^t(p)\in V$ $\forall t\geq T$.  
\end{enumerate}
\end{Def}
The combination of Lyapunov stability and local attractivity is referred to as \textit{local asymptotic stability}. When the neighbourhood $W$ in Item~\eqref{local:stab} can be chosen to be all of $\mathsf{M}$ we speak of \textit{global} asymptotic stability. 
Local asymptotic stability is also captured by the existence of an open neighbourhood $U\subseteq \mathsf{M}$ of $A$ such that $\cap_{t\geq 0}\varphi^t(U)=A$~\cite[Lem.~1.6]{ref:hurley1982attractors}

On can find several definitions of ``\textit{attractors}'' in the literature, see for instance~\cite[Def.~V.1.5]{ref:bhatia1970stability}, \cite{milnor1985concept} and~\cite[Sec.~2.2]{ref:kapitanski2000shape}. 

\begin{Def}[Domain of attraction]
\label{def:DOA}
    Let the compact set $A\subseteq \mathsf{M}$ be an invariant, local asymptotically stable attractor under the semi-dynamical system $\Sigma=(\mathsf{M},\varphi,\mathbb{R}_{\geq 0})$, then, its domain of attraction is
    \begin{align*}
        \mathcal{D}_{\Sigma}(A)=\{p\in \mathsf{M}\,:\,&\text{for any open neighbourhood }U\subseteq \mathsf{M}\\&\text{of }A \text{ there is a }T\geq 0 \text{ such that }\\&\varphi^t(p)\in U\, \forall t\geq T\}. 
    \end{align*}
\end{Def}
Definition~\ref{def:DOA} can be equivalently written in terms of convergent subsequences. Topological properties of attractors $A\subseteq \mathsf{M}$ and their respective domain of attraction $\mathcal{D}_{\Sigma}(A)\subseteq \mathsf{M}$ are an active topic of study since the 1960s~\cite{ref:bhatia1970stability,ref:jongeneel2023topological}.

To elaborate on the introduction, the interest stems from the observation that the (numerical) analysis or synthesis,~\textit{e.g.}, via feedback control, of dynamical systems $\Sigma$ can be involved, while topological properties of the pair $(\mathcal{D}_{\Sigma}(A),A)$ might be readily available. Here, topological knowledge of $\mathcal{D}_{\Sigma}(A)$ is frequently used to study if some ``\textit{desirable}'' domain of attraction is admissible. For instance, one can show that no point $p\in \mathbb{S}^1$ can be a global asymptotically stable attractor under any $\Sigma=(\mathbb{S}^1,\varphi,\mathbb{R}_{\geq 0})$,~\textit{e.g.}, see Theorem~\ref{thm:sontag} below. The intuition being that for this to be true the circle $\mathbb{S}^1$ needs to be torn apart, which is obstructed by demanding $\varphi$ to be continuous, see also~\cite[Fig.~1.1]{ref:jongeneel2023topological}. Again, we emphasize that conclusions of this form emerge without involved analysis of any particular system $\Sigma$.  

\subsection{Retraction theory}
\label{sec:retraction}
The previous example can be understood through $\mathbb{S}^1$ not being \textit{contractible}, that is, $\mathbb{S}^1$ is not \textit{homotopy equivalent} to a point $p$. Formally, two topological spaces $X$ and $Y$ are said to have the same \textit{homotopy type} when they are homotopy equivalent\footnote{More abstractly, homotopies are isomorphisms in the homotopy category of topological spaces.}, that is, there are continuous maps $f:X\to Y$ and $g:Y\to X$ such that $f\circ g\simeq_h \mathrm{id}_Y$ and $g\circ f\simeq_h \mathrm{id}_X$. In some sense, this notion is a more general version of a deformation retract---which we recall below, and is most naturally understood through invariance in differential-~\cite{ref:guillemin2010differential} and algebraic topology~\cite{Hatcher}. As alluded to, now, we recall that $A\subseteq X$ is a \textit{retract} of $X$ when there is a map $r:X\to A$ such that $r\circ \iota_A = \mathrm{id}_A$, for $\iota_A$ the inclusion map $\iota_A:A\hookrightarrow X$. The set $A$ is said be a \textit{deformation retract} of $X$ when $A$ is a retract and additionally $\iota_A\circ r \simeq_h \mathrm{id}_X$, implying that $X$ is homotopy equivalent to $A$. When, additionally, the homotopy is \textit{stationary relative to $A$}, we speak of a \textit{strong deformation retract}. 

\begin{Rem}[Deformation retracts]
The literature does not agree on what a ``\textit{deformation retract}'' is. For instance, Hatcher calls strong deformation retracts simply deformation retracts and speaks of ``deformation retracts in the weak sense'' where we would be speaking of simply a deformation retract~\cite[Ch.~0]{Hatcher}. This should be contrasted with for instance the 1965 text of Hu~\cite[Sec.~1.11]{ref:hu1965}. 
\end{Rem}

Next, a set $A\subseteq X$ is said to be a \textit{weak deformation retract} of $X$ when every open neighbourhood $U\supseteq A$ contains a strong deformation retract $V\supseteq A$ of $X$.   

In this note we will elaborate on the following result due to Moulay \& Bhat. In particular, we aim to understand when $\mathcal{D}_{\Sigma}(A)$ strongly deformation retracts onto $A$ and not just to a subset of a neighbourhood around $A$.
\begin{Thm}[{\cite[Thm.~5]{ref:moulay2010topological}}]
\label{thm:moulay-bhat}
Suppose that the compact set $A\subseteq \mathsf{M}$ is an invariant, local asymptotically stable attractor under the semi-dynamical system $\Sigma=(\mathsf{M},\varphi,\mathbb{R}_{\geq 0})$, then, $A$ is a weak deformation retract of $\mathcal{D}_{\Sigma}(A)$. 
\end{Thm}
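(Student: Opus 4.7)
The plan is to show that every open neighbourhood $U\supseteq A$ in $\mathcal{D}_\Sigma(A)$ contains a set $V\supseteq A$ that is a strong deformation retract of $\mathcal{D}_\Sigma(A)$, with the deforming homotopy produced by a reparametrised flow along $\varphi$.

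First I would fix an arbitrary open $U\supseteq A$. Combining Lyapunov stability, local attractivity, and the locally compact Hausdorff structure of $\mathsf{M}$, I would extract a closed positively invariant neighbourhood $V\supseteq A$ with $V\subseteq U$ whose ``boundary'' is crossed \textit{monotonically} by non-trivial orbits. The cleanest route is to appeal to a converse Lyapunov theorem in the spirit of Bhatia-Szeg\"o / Auslander-Bhatia-Seibert, yielding a continuous proper function $L:\mathcal{D}_\Sigma(A)\to[0,\infty)$ with $L^{-1}(0)=A$ and with $t\mapsto L(\varphi^t(p))$ strictly decreasing for every $p\in\mathcal{D}_\Sigma(A)\setminus A$; one then takes $V=L^{-1}([0,c])$ for $c>0$ small enough that $V\subseteq U$, which is guaranteed by continuity of $L$ and compactness of $A$.

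Next I would build the hitting time $\tau:\mathcal{D}_\Sigma(A)\to[0,\infty)$ by
\begin{equation*}
    \tau(p)=\inf\{t\geq 0\,:\,\varphi^t(p)\in V\}.
\end{equation*}
Since every orbit in $\mathcal{D}_\Sigma(A)$ converges to $A\subseteq V$, the set on the right is non-empty, $\tau$ is finite, and $\tau(p)=0$ precisely for $p\in V$. I would then define
\begin{equation*}
    H:\mathcal{D}_\Sigma(A)\times[0,1]\to\mathcal{D}_\Sigma(A),\qquad H(p,s)=\varphi^{s\tau(p)}(p),
\end{equation*}
and check that $H(\cdot,0)=\mathrm{id}_{\mathcal{D}_\Sigma(A)}$, $H(p,1)\in V$, and $H(v,s)=v$ for every $v\in V$ and $s\in[0,1]$. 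Setting $r=H(\cdot,1)$, this realises $V$ as a strong deformation retract of $\mathcal{D}_\Sigma(A)$ sitting inside the prescribed $U$, which is exactly the weak deformation retract property.

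The hard part is continuity of $\tau$, i.e., ruling out orbits that graze $\partial V$ tangentially. This is precisely why the construction of $V$ as a sublevel set of a \textit{strictly} decreasing Lyapunov function is convenient: strict monotonicity of $t\mapsto L(\varphi^t(p))$ forces $\{L=c\}$ to be crossed in a one-sided order-theoretic sense, and joint continuity of $\varphi$ together with a standard squeezing argument then yields continuity of $\tau$ on all of $\mathcal{D}_\Sigma(A)$, including at $\partial V$ where $\tau$ must vanish. A secondary obstacle is the justification of a converse Lyapunov theorem in the locally compact Hausdorff (as opposed to metric) setting; should one wish to avoid it, one can instead work directly with a fundamental system of open positively invariant neighbourhoods of $A$ extracted from Lyapunov stability and Assumption~\ref{ass:starting:points}, and then reparametrise the flow by a monotone entry-time function defined over this system, at the price of a more delicate continuity argument.
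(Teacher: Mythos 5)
The paper does not actually prove Theorem~\ref{thm:moulay-bhat} itself---it imports it from \cite[Thm.~5]{ref:moulay2010topological}---and your argument is essentially the proof of that cited result: a converse Lyapunov function on $\mathcal{D}_\Sigma(A)$, a positively invariant sublevel set $V\subseteq U$, and the entry-time-reparametrised flow $H(p,s)=\varphi^{s\tau(p)}(p)$ realising $V$ as a strong deformation retract, with strict decrease of $L$ giving continuity of $\tau$ exactly as you indicate. The only minor imprecisions are that $V\subseteq U$ for small $c$ relies on compactness of sublevel sets (properness of $L$), not merely continuity of $L$ and compactness of $A$, and that the converse Lyapunov theorem in the locally compact Hausdorff setting is the same ingredient the paper itself invokes via \cite[Thm.~10.6]{ref:bhatiahajek2006local} in Lemma~\ref{lem:cofib2}, so flagging it as an obstacle is prudent but not a gap relative to the paper's own standing assumptions.
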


It is well-known that when $\mathsf{M}$ is a smooth manifold and $A$ is an embedded submanifold of $\mathsf{M}$, then, $A$ is a strong neighbourhood deformation retract (see below for the definition) of $\mathsf{M}$ and thus $A$ is homotopic to $\mathcal{D}_{\Sigma}(A)$~\cite[Prop.~10]{ref:moulay2010topological}, see also~\cite{ref:lin2022wilson}. 
Our aim is to provide further, especially weaker, conditions for this to be true.

Indeed, Theorem~\ref{thm:moulay-bhat} can be seen as a generalization of the following well-known result due to Sontag.

\begin{Thm}[{\cite[Thm.~21]{ref:sontag2013mathematical}}]
\label{thm:sontag}
Suppose that the point $A:=\{p\}\subseteq \mathsf{M}$ is an invariant, global asymptotically stable attractor under the semi-dynamical system $\Sigma=(\mathsf{M},\varphi,\mathbb{R}_{\geq 0})$, then, $\mathcal{D}_{\Sigma}(A)$ is contractible. 
\end{Thm}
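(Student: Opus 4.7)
My plan is to construct an explicit contracting homotopy for $\mathsf{M}$. Since $A=\{p\}$ is globally asymptotically stable, $\mathcal{D}_{\Sigma}(A)=\mathsf{M}$ by Definition~\ref{def:DOA}, so contractibility of $\mathcal{D}_{\Sigma}(A)$ reduces to producing a homotopy $H:[0,1]\times \mathsf{M}\to \mathsf{M}$ between $\mathrm{id}_{\mathsf{M}}$ and the constant map $c_p:x\mapsto p$.

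The obvious candidate is the reparametrized semi-flow
\[
    H(s,x):=\begin{cases}\varphi^{s/(1-s)}(x), & s\in [0,1),\\ p, & s=1,\end{cases}
\]
which by construction satisfies $H(0,x)=x$ and $H(1,x)=p$. The entire argument then reduces to checking continuity of $H$. On $[0,1)\times \mathsf{M}$, this follows at once from continuity of $\varphi$ together with that of the reparametrization $s\mapsto s/(1-s)$.

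The main obstacle is continuity at every point of the form $(1,x_0)$; this is where both Lyapunov stability and global attractivity must enter, and the argument essentially recycles the classical ``$\varepsilon$--$\delta$'' reasoning underlying Lyapunov asymptotic stability. Given an arbitrary open $V\ni p$, I would first invoke Lyapunov stability to extract a forward-invariant open $U\subseteq V$ with $p\in U$, that is, with $\varphi(\mathbb{R}_{\geq 0},U)\subseteq V$. Global attractivity next supplies a time $T\geq 0$ with $\varphi^T(x_0)\in U$, and continuity of $\varphi^T$ yields an open $W\ni x_0$ with $\varphi^T(W)\subseteq U$. Choosing $\delta>0$ small enough that $s/(1-s)\geq T$ whenever $s\geq 1-\delta$, forward invariance of $U$ then gives
\[
    H(s,x)=\varphi^{s/(1-s)-T}(\varphi^T(x))\in V\quad \forall (s,x)\in [1-\delta,1)\times W,
\]
while $H(1,\cdot)\equiv p\in V$, so $H$ maps a neighbourhood of $(1,x_0)$ into $V$, as required.

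A natural alternative would be to deduce the theorem from Theorem~\ref{thm:moulay-bhat}, but the weak deformation retract property only furnishes, inside every neighbourhood of $p$, a strong deformation retract of $\mathsf{M}$ homotopy equivalent to $\mathsf{M}$; these homotopies depend on the chosen neighbourhood and do not obviously glue into a single contraction. The direct reparametrized-flow construction sketched above is therefore the most economical route, and it uses neither Assumption~\ref{ass:starting:points} nor any structure on $\mathsf{M}$ beyond what is built into the standing hypotheses.
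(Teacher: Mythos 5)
Your proof is correct. Note that the paper itself gives no proof of Theorem~\ref{thm:sontag}: it is imported from Sontag's book and used only as motivation for Theorem~\ref{thm:moulay-bhat} and Theorem~\ref{thm:cofib}. Your argument --- the reparametrized flow $H(s,x)=\varphi^{s/(1-s)}(x)$ for $s\in[0,1)$, $H(1,\cdot)\equiv p$, with continuity at $s=1$ checked via stability and attractivity --- is exactly the classical proof underlying that citation, and each step is valid with the paper's definitions: global attractivity gives $\mathcal{D}_{\Sigma}(A)=\mathsf{M}$, Lyapunov stability supplies, for each open $V\ni p$, an open $U\ni p$ with $\varphi(\mathbb{R}_{\geq 0},U)\subseteq V$, and the semi-group axiom justifies $\varphi^{s/(1-s)}(x)=\varphi^{s/(1-s)-T}(\varphi^T(x))$ once $s/(1-s)\geq T$. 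One wording slip: the $U$ produced by Lyapunov stability need not be forward-invariant in the sense $\varphi(\mathbb{R}_{\geq 0},U)\subseteq U$; the property you actually invoke (and actually have) is $\varphi(\mathbb{R}_{\geq 0},U)\subseteq V$, so the displayed estimate stands but the phrase ``forward invariance of $U$'' should be replaced accordingly. Your closing observation is also consonant with the paper: Theorem~\ref{thm:moulay-bhat} alone only yields a weak deformation retract and does not obviously glue into a contraction, which is precisely the gap the paper's cofibration condition (Theorem~\ref{thm:cofib}) is designed to close; the direct construction is therefore the right route here, and, as you say, it needs neither Assumption~\ref{ass:starting:points} nor local compactness.
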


We aim to strengthen this generalization. We do this by appealing to neighbourhood retracts. A set $A\subseteq X$ is a \textit{neighbourhood retract} of $X$ when there is an open neighbourhood $U\subseteq X$ of $A$ such that $A$ is retract of $U$. This definition extends naturally to (strong) deformation retracts. Now indeed, if for instance $\mathsf{M}$ weakly deformation retracts onto $A\subseteq \mathsf{M}$ while $A$ is a neighbourhood deformation retract of $\mathsf{M}$, then $\mathsf{M}$ deformation retracts onto $A$ and thus $A\simeq_h \mathsf{M}$~\cite[Thm.~4]{ref:moulay2010topological}.

In this note we focus on homotopy equivalence, a similar but weaker notion is that of \textit{shape equivalence}, understood as being for \v{C}ech (co)homology what homotopy theory is for singular (co)homology.  Indeed, only for sufficiently ``\textit{nice}'' spaces, singular cohomology and \v{C}ech cohomology agree. For a concise introduction to shape theory, in the sense of Fox~\cite{ref:rh1972shape}, we refer the reader to~\cite[Sec.~3]{ref:kapitanski2000shape}. The crux is to work with open neighbourhoods of a set and not solely with the set itself. For instance, the Warsaw circle $\mathbb{W}^1$, as studied below in Example~\ref{ex:Sw} is not homotopy equivalent to the circle $\mathbb{S}^1$ but the two spaces are shape equivalent.  


\section{Cofibrations}
It follows from Theorem~\ref{thm:moulay-bhat} that for $A$ to be homotopy equivalent to $\mathcal{D}_{\Sigma}(A)$ , it suffices for $A$ to be a neighbourhood deformation retract of $\mathcal{D}_{\Sigma}(A)$. We will appeal to \textit{cofibrations} to capture this property. 

The theory of cofibrations emerges from the so-called ``\textit{extension problem}'', that is, understanding when a continuous map $f:A\subseteq X\to Y$ can be extended from $A$ to all of $X$. A typical counterexample is any map $f:\partial \mathbb{D}^{n+1}\to Y$ of nonzero degree (for $Y$ compact and with $\mathrm{dim}(Y)=n)$, that is, when $\mathrm{deg}_2(f)\neq 0$, $f$ cannot be extended from the $n$-sphere $\mathbb{S}^{n}\simeq \partial \mathbb{D}^{n+1}$ to the $(n+1)$-disk $\mathbb{D}^{n+1}$~\cite[Sec.~2.4]{ref:guillemin2010differential}.

Now, cofibrations tell us, loosely speaking, if maps that can be extended, lead to homotopies that can be extended. To define this, we need the following. 
Let $X$ be a topological space and $A\subseteq X$, then, a pair $(X,A)$ has the \textit{homotopy extension property} (HEP) when, for any $Y$, the diagram 
\begin{equation}
\label{equ:diag1}
\begin{tikzcd}[row sep=small, column sep=large]
(A\times [0,1]) \cup (X \times \{0\}) \arrow{r}{} \arrow[hookrightarrow]{d} & Y  \\
X\times [0,1] \arrow[dotted, bend right]{ur}
\end{tikzcd}
\end{equation}
can always be completed (``completed'' means that the \textit{dotted arrow} \dottedarrow\ can be found) to be commutative. Differently put, given a a homotopy $H:A\times [0,1]\to Y$ and some map $g:X\to Y$ such that $H(\cdot,0)=g|_A$, one needs to be able to extend the homotopy from $A$ to $X$.
Pick $Y=(A\times [0,1]) \cup (X \times \{0\})$, then we see that $(X,A)$ having the HEP implies that $(A\times [0,1]) \cup (X \times \{0\})$ is a retract of $X\times [0,1]$. On the other hand, one can show that the existence of such a retract implies that $(X,A)$ has the HEP, that is, these two notions are equivalent, see Theorem~\ref{thm:cofib:retract:ndr} below. See also~\cite[p.~13]{strom1966note} for a stronger result. 

Then, a continuous map $i:A\to X$ is said be a \textit{cofibration}\footnote{Our notion of cofibration is aligned with the so-called \textit{Hurewicz cofibration}. We will not discuss \textit{Serre cofibrations}, which are most easily understood through \textit{fibrations}, a notion dual to that of cofibrations,~\textit{e.g.}, see~\cite[Ch.~7]{may1999concise}.} if the following commutative diagram 
\begin{equation}
\label{equ:diag2}
\begin{tikzcd}[row sep=small,column sep=large]
A \times \{0\} \arrow{d} \arrow[hookrightarrow]{r}               & A\times [0,1] \arrow{d} \arrow[rdd, bend left] &   \\
X\times \{0\} \arrow[hookrightarrow]{r} \arrow[rrd, bend right] & X\times [0,1] \arrow[rd, dotted]               &   \\
                                    &                                    & Y
\end{tikzcd}
\end{equation}
can be completed for any $Y$. Loosely speaking, the map $i$ is a cofibration when it has the HEP\footnote{We focus on pairs $(X,A)$ such that $A\subseteq X$, this inclusion is, however, not required for a cofibration to be well-defined. Nevertheless, to make sense of~\eqref{equ:diag1} one should work with $(A\times [0,1]) \cup_{i} X$ instead of $(A\times [0,1]) \cup (X \times \{0\})$, that is, with the so-called ``\textit{mapping cylinder}'' as further discussed below.}. 

Next, we need a slight variation of the aforementioned notions of retraction, that of a \textit{neighbourhood deformation retract pair} (NDR pair).

\begin{Def}[NDR pair]
\label{def:NDR}
A pair $(X,A)$ is said to be an NDR pair if:
\begin{enumerate}[(i)]
    \item there is a continuous map $u:X\to [0,1]$ such that $A=u^{-1}(0)$; and
    \item there is a homotopy $H:X\times [0,1]\to X$ such that $H(\cdot,0)=\mathrm{id}_X$, $H(x,s)=x$ $\forall x\in A$, $\forall s\in [0,1]$ and $H(x,1)\in A$ if $u(x)<1$.
\end{enumerate} 
\end{Def}

See that if $u(x)<1$ $\forall x\in X$, then, $A$ is a strong deformation retract of $X$. In general, however, we cannot assume $u$ to be of this form. See that for $(X,A)$ to be an NDR pair, $A$ must be closed. Now, a useful result is the following.
\begin{Thm}[{\cite[Ch.~6]{may1999concise}}]
\label{thm:cofib:retract:ndr}
    Let $A$ be closed in $X$, then, the following are equivalent:
    \begin{enumerate}[(i)]
        \item the inclusion $\iota_A:A\hookrightarrow X$ is a cofibration;
        \item \label{item:retract} $(A\times [0,1])\cup (X\times\{0\})$ is a retract of $X\times [0,1]$;
        \item \label{item:NDR} $(X,A)$ is an NDR pair. 
    \end{enumerate}
\end{Thm}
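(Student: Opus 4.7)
The plan is to prove the equivalences (i) $\Leftrightarrow$ (ii) and (ii) $\Leftrightarrow$ (iii) separately; the first is essentially a formal diagram chase, whereas the second requires explicit constructions of a retraction from NDR data and of NDR data from a retraction.

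For (i) $\Leftrightarrow$ (ii): To obtain (i) $\Rightarrow$ (ii), I would specialize diagram~\eqref{equ:diag2} to the test space $Y := (A \times [0,1]) \cup (X \times \{0\})$, with the two maps into $Y$ being the canonical inclusions; the dotted arrow supplied by the cofibration property is then forced to restrict to the identity on $Y$, giving a retraction $r : X \times [0,1] \to Y$. Conversely, for (ii) $\Rightarrow$ (i), any instance of~\eqref{equ:diag2} with some target $Y$ provides data $g : X \to Y$ (from the bottom-left map) and $H : A \times [0,1] \to Y$ (from the top-right map) agreeing on $A \times \{0\}$; the pasting lemma (valid since $A$ is closed) glues these into a continuous map $\tilde{H} : (A \times [0,1]) \cup (X \times \{0\}) \to Y$, and $\tilde{H} \circ r$ completes the diagram.

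For (iii) $\Rightarrow$ (ii), I would build the retraction $r : X \times [0,1] \to (A \times [0,1]) \cup (X \times \{0\})$ piecewise from the NDR data $(u, H)$, viewing $u(x)$ as a ``time budget required to reach $A$''. On the region $\{t \leq u(x)\}$, retract to $X \times \{0\}$ via $(x, t) \mapsto (H(x, t/u(x)), 0)$; on the complementary region $\{t > u(x)\}$, retract into the attractor coordinate via $(x, t) \mapsto (H(x, 1), t - u(x))$. The second formula lands in $A$ because $t > u(x)$ and $t \leq 1$ force $u(x) < 1$, whence $H(x,1) \in A$ by Definition~\ref{def:NDR}. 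The two branches agree on the seam $t = u(x)$ (both yield $(H(x,1), 0)$), so the only delicate points are continuity at $A = u^{-1}(0)$, where $t/u(x)$ is formally undefined, and handling $(x, 0)$ for $x \in A$; in both cases the NDR stationary condition $H(a, s) = a$ for $a \in A$ makes the limiting value $(x, t)$ well-defined. This bookkeeping at the seam and at $A$ is the main technical obstacle.

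For (ii) $\Rightarrow$ (iii), write the retraction as $r(x, t) = (r_1(x, t), r_2(x, t))$ and extract
\[
 H(x, s) := r_1(x, s), \qquad u(x) := \max_{t \in [0,1]} \bigl( t - r_2(x, t) \bigr).
\]
Continuity of $u$ is automatic from compactness of $[0,1]$, and $u(x) \in [0,1]$ because $r_2(x, 0) = 0$ and $r_2 \geq 0$ pin the maximum into $[0, 1]$. For $x \in A$ the retract fixes $(x, t)$ pointwise, forcing $r_2(x, t) = t$ and hence $u(x) = 0$. The reverse containment $u^{-1}(0) \subseteq A$ is where closedness of $A$ becomes essential: if $r_2(x, t) = t$ for every $t$, then for each $t > 0$ the image $r(x, t) = (r_1(x, t), t)$ lies in $A \times [0, 1]$, so $r_1(x, t) \in A$; continuity and $r_1(x, 0) = x$ then place $x$ in $\overline{A} = A$. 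The homotopy axioms of Definition~\ref{def:NDR} follow immediately: $H(x, 0) = x$, $H$ is stationary on $A$, and $u(x) < 1$ forces $r_2(x, 1) > 0$, hence $r_1(x, 1) = H(x, 1) \in A$.
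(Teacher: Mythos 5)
Your proposal is correct and goes well beyond what the paper itself proves: the paper only sketches the implication (ii)\,$\Rightarrow$\,(iii), defining $u$ exactly as you do, via~\eqref{equ:u}, and $H=\pi_1\circ r$, deferring the rest to~\cite{may1999concise}. Your (ii)\,$\Rightarrow$\,(iii) is thus the same argument as the paper's (your compactness remark replaces the appeal to Berge's theorem); your (i)\,$\Leftrightarrow$\,(ii) is the standard universal-example-plus-pasting argument already hinted at after~\eqref{equ:diag1}; and your (iii)\,$\Rightarrow$\,(ii) replaces May's route (which derives the retraction from the product theorem for NDR pairs applied to $(X,A)$ and $([0,1],\{0\})$) by the classical explicit two-branch retraction, which is more elementary and self-contained. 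Two points need tightening. First, the delicate step you flag in (iii)\,$\Rightarrow$\,(ii)---continuity at points of $A\times\{0\}$, where $t/u(x)$ degenerates---is not settled by the stationarity $H(a,s)=a$ alone: you need that for every neighbourhood $V$ of $a\in A$ there is a neighbourhood $U$ of $a$ with $H(U\times[0,1])\subseteq V$, which follows from the tube lemma (compactness of $[0,1]$) applied to the open set $H^{-1}(V)\supseteq\{a\}\times[0,1]$; with that inserted, pasting the two branches along the closed sets $\{t\le u(x)\}$ and $\{t\ge u(x)\}$ is rigorous, and the diagonal seam and the points with $u(x)=1$ cause no trouble, as you note. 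Second, in (ii)\,$\Rightarrow$\,(iii), $u(x)=0$ only yields $r_2(x,t)\ge t$ for all $t$, not $r_2(x,t)=t$; this is harmless, since your argument only uses $r_2(x,t)>0$ for $t>0$, which forces $r(x,t)\in A\times(0,1]$, hence $r_1(x,t)\in A$, and then continuity, $r_1(x,0)=x$ and closedness of $A$ give $x\in A$ exactly as you conclude.
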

To be somewhat self-contained, we provide intuition regarding Item~\eqref{item:retract} and Item~\eqref{item:NDR}. In particular, we highlight continuity. 
\begin{proof}[Proof (sketch)]
    Suppose we have the retract $r:X\times [0,1]\to (A\times [0,1])\cup (X\times \{0\})$. Define the projections $\pi_1:X\times [0,1]\to X$ and $\pi_2:X\times [0,1]\to [0,1]$. Next, define the map $u:X\to [0,1]$ via
    \begin{equation}
    \label{equ:u}
        u(x) = \sup_{\tau\in [0,1]}\{\tau-\pi_2 ( r(x,\tau))\},
    \end{equation}
    where the supremum is attained since $[0,1]$ is compact and $\pi_2$ and $r$ are continuous. Now define the homotopy $H:X\times [0,1]\to X$ by $H(x,s)=\pi_1( r(x,s))$. Indeed, one can readily check that the pair $(u,H)$ satisfies the properties required for $(X,A)$ to be an NDR pair. Note that since the retract $r$ is continuous, we have that $u$ is not identically $0$ when $X\setminus A \neq \emptyset$. The map $u$ constructed through~\eqref{equ:u} is in fact continuous since $[0,1]$ is compact and both $\pi_2$ and $r$ are continuous,~\textit{e.g.}, one can appeal to the simplest setting of Berge's maximum theorem~\cite{ref:berge1963topological}. 
\end{proof}

Note that in general, $(A\times [0,1])\cup (X\times\{0\})$ will be equivalent to the \textit{mapping cylinder} under the inclusion map $\iota_A:A\hookrightarrow X$, that is, $M{\iota_A}=( (A\times [0,1]) \cup X)/\sim$ with $(a,0)\sim \iota_A(a)$ for all $a\in A$, also denoted by $(A\times [0,1])\cup_{\iota_A}X$. Equivalence can possibly fail when the product and quotient topologies under consideration do not match.  

For illustrative purposes, we end this section with the collection of a powerful result. Omitting the details, we recall that $X$ is a \textit{CW complex} when $X$ can be constructed via iteratively ``\textit{glueing}'' $n$-cells, being topological disks $\mathbb{D}^n$, along their boundary to a $(n-1)$-dimensional CW complex, with a $0$-dimensional CW complex being simply a set of discrete points. For instance, the circle $\mathbb{S}^1$ can be constructed from a single point and the interval. Then, a set $A\subseteq X$ is a \textit{subcomplex} of the CW complex $X$ when it is closed and a union of cells of $X$. For more on CW complices, we refer to~\cite[Ch.~0]{Hatcher} and~\cite[Ch.~5]{Lee1}.

\begin{Prop}[CW complices~{\cite[Prop.~0.16]{Hatcher}}]
\label{prop:CW:cofib}
Let $X$ be a CW complex and $A\subseteq X$ a subcomplex, then, the inclusion map $\iota_A:A\hookrightarrow X$ is a cofibration. 
\end{Prop}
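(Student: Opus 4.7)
The plan is to apply Theorem~\ref{thm:cofib:retract:ndr} by constructing a retraction $r: X \times [0,1] \to (A \times [0,1]) \cup (X \times \{0\})$ explicitly. Since a subcomplex of a CW complex is closed, the standing hypothesis of that theorem is met. The natural strategy is induction on cells outside $A$: set $X_{-1} := A$ and $X_n := A \cup X^{(n)}$, where $X^{(n)}$ denotes the $n$-skeleton of $X$. Then $X_n$ is obtained from $X_{n-1}$ by attaching precisely those $n$-cells of $X$ that do not lie in $A$.

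For the base case I would establish that the model pair $(\mathbb{D}^n, \mathbb{S}^{n-1})$ has the HEP by writing down an explicit retraction
\[
\rho_n : \mathbb{D}^n \times [0,1] \longrightarrow (\mathbb{D}^n \times \{0\}) \cup (\mathbb{S}^{n-1} \times [0,1]).
\]
A classical choice is radial projection from the ``lighthouse'' point $(0,\dots,0,2) \in \mathbb{R}^{n+1}$: every ray from this point through a point $(x,t) \in \mathbb{D}^n \times [0,1]$ meets the target ``cup'' in exactly one point, and the resulting assignment is continuous.

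For the inductive step, assume a retraction $r_{n-1}: X_{n-1} \times [0,1] \to (A \times [0,1]) \cup (X_{n-1} \times \{0\})$ has been built. Since $X_n$ is the pushout
\[
X_n \;=\; X_{n-1} \;\cup_{\Phi}\; \bigsqcup_{\alpha} \mathbb{D}^{n}_{\alpha},
\]
where $\Phi : \bigsqcup_{\alpha} \mathbb{S}^{n-1}_{\alpha} \to X_{n-1}$ is the attaching map, I would extend $r_{n-1}$ cell by cell: on each $\mathbb{D}^n_{\alpha} \times [0,1]$ use $\rho_n$, reconciling the boundary contribution $\mathbb{S}^{n-1}_\alpha \times [0,1]$ with $r_{n-1}$ via the attaching map. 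Because $[0,1]$ is locally compact, the functor $(-)\times[0,1]$ preserves the relevant colimit, so the pushout's universal property yields a continuous map $r_n$ extending $r_{n-1}$.

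The main obstacle is assembling the $r_n$ into a single continuous retraction when $X$ is infinite-dimensional. The space $X$ carries the weak topology determined by $\{X_n\}$, and the compatibility $r_n|_{X_{n-1} \times [0,1]} = r_{n-1}$ certainly defines $r$ set-theoretically. The subtlety is that continuity of $r$ requires $X \times [0,1]$ itself to carry the colimit topology with respect to $\{X_n \times [0,1]\}$. This is where the compactness of $[0,1]$ pays off once more: products with a locally compact Hausdorff factor commute with colimits in the category of topological spaces, so indeed $X \times [0,1] = \mathrm{colim}_n (X_n \times [0,1])$, from which continuity of $r$ follows on each piece. Theorem~\ref{thm:cofib:retract:ndr}\eqref{item:retract} then delivers the conclusion that $\iota_A$ is a cofibration.
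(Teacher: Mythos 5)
Your argument is correct and is essentially the proof the paper defers to: the paper offers no proof of Proposition~\ref{prop:CW:cofib} beyond citing Hatcher and sketching its key ingredient, the ``lighthouse'' retraction of $\mathbb{D}^n\times[0,1]$ onto $(\partial\mathbb{D}^n\times[0,1])\cup(\mathbb{D}^n\times\{0\})$ by projection from a point above the disk, which is exactly your $\rho_n$. Your skeletal induction (gluing $\rho_n$ to $r_{n-1}$ via the attaching maps, i.e.\ composing the boundary part with $r_{n-1}$) and the colimit argument using local compactness of $[0,1]$ are the standard completion of that sketch, concluding correctly through item~\eqref{item:retract} of Theorem~\ref{thm:cofib:retract:ndr}.
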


Proposition~\ref{prop:CW:cofib} hinges on $\iota_{\mathbb{S}^{n-1}}:\mathbb{S}^{n-1}\hookrightarrow \mathbb{D}^{n}$ being a cofibration, which can be shown via showing that $(\mathbb{D}^n,\mathbb{S}^{n-1})$ is an NDR pair, however, using a strategy of more general use, one can show there is a (strong deformation) retract from $\mathbb{D}^n\times [0,1]$ onto $(\partial \mathbb{D}^n\times [0,1])\cup (\mathbb{D}^n\times \{0\})$~\cite[p.~15]{Hatcher},~\textit{e.g.}, consider some point $(0,c)\in \mathbb{D}^n\times \mathbb{R}_{\geq 2}$ and project $(0,c)$ onto $(\partial \mathbb{D}^n\times [0,1])$, then, the line between $(0,c)$ and this projection provides for the homotopy. 

CW complices are fairly general, yet, properties that obstruct $X$ admitting a CW decomposition are for instance: (i) $X$ failing to be locally contractible~\cite[Prop.~A4]{Hatcher}; and (2) $X$ failing to adhere to Whitehead's theorem~\textit{cf.}~Example~\ref{ex:Sw}.  

\subsection{Main result}
Now, we have collected all ingredients to prove the following. 
\begin{Lem}[$\Leftarrow$~Cofibration]
\label{lem:cofib1}
    Let $A\subseteq \mathsf{M}$ be a compact, invariant, asymptotically stable, attractor with domain of attraction $\mathcal{D}_{\Sigma}(A)$. If $\iota_A:A\hookrightarrow \mathcal{D}_{\Sigma}(A)$ is a cofibration, then, $A$ is a strong deformation retract of $\mathcal{D}_{\Sigma}(A)$.
\end{Lem}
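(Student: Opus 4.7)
The plan is to combine the weak deformation retract provided by Theorem~\ref{thm:moulay-bhat} with the structure implied by cofibrationhood via Theorem~\ref{thm:cofib:retract:ndr}, and then concatenate the two resulting homotopies.

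First, since $A$ is closed (it is compact in the Hausdorff space $\mathsf{M}$), Theorem~\ref{thm:cofib:retract:ndr} tells us that $(\mathcal{D}_{\Sigma}(A),A)$ being a cofibration is equivalent to being an NDR pair. So we obtain a continuous map $u:\mathcal{D}_{\Sigma}(A)\to [0,1]$ with $A=u^{-1}(0)$, together with a homotopy $H:\mathcal{D}_{\Sigma}(A)\times [0,1]\to \mathcal{D}_{\Sigma}(A)$ such that $H(\cdot,0)=\mathrm{id}$, $H(a,s)=a$ for all $a\in A$, $s\in [0,1]$, and $H(x,1)\in A$ whenever $u(x)<1$. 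The set $V_0:=u^{-1}([0,1))$ is then an open neighbourhood of $A$ on which the time-one map of $H$ already retracts into $A$ (relative to $A$). Outside $V_0$ the homotopy $H$ is not guaranteed to land in $A$, so $H$ alone is not enough.

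Next, I would use Theorem~\ref{thm:moulay-bhat}: $A$ is a weak deformation retract of $\mathcal{D}_{\Sigma}(A)$, meaning the open neighbourhood $V_0$ contains some $V$ with $A\subseteq V\subseteq V_0$ that is itself a strong deformation retract of $\mathcal{D}_{\Sigma}(A)$. Let $G:\mathcal{D}_{\Sigma}(A)\times [0,1]\to \mathcal{D}_{\Sigma}(A)$ be a corresponding homotopy, satisfying $G(\cdot,0)=\mathrm{id}$, $G(x,1)\in V$ for all $x$, and $G(v,s)=v$ for all $v\in V$, $s\in [0,1]$ (in particular for all $a\in A$).

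Then I would concatenate: define $F:\mathcal{D}_{\Sigma}(A)\times [0,1]\to \mathcal{D}_{\Sigma}(A)$ by
\[
F(x,t)=\begin{cases} G(x,2t), & t\in [0,\tfrac12],\\ H(G(x,1),\,2t-1), & t\in [\tfrac12,1]. \end{cases}
\]
Continuity follows from continuity of $G$ and $H$ together with the gluing identity $G(x,1)=H(G(x,1),0)$ at $t=\tfrac12$. By construction $F(\cdot,0)=\mathrm{id}$. Since $G(x,1)\in V\subseteq V_0$, we have $u(G(x,1))<1$, hence $F(x,1)=H(G(x,1),1)\in A$, producing the desired retraction $r:=F(\cdot,1)$. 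Finally, stationarity on $A$ holds: for $a\in A\subseteq V$, $G(a,2t)=a$ on $[0,\tfrac12]$ and then $H(a,2t-1)=a$ on $[\tfrac12,1]$. Hence $A$ is a strong deformation retract of $\mathcal{D}_{\Sigma}(A)$.

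The main subtlety I expect is arranging the strong deformation retract $V$ from Theorem~\ref{thm:moulay-bhat} to sit inside the specific open set $V_0=u^{-1}([0,1))$ produced by the NDR structure; without that inclusion the time-one map $H(\cdot,1)$ would not be guaranteed to land in $A$. Once this alignment is secured, everything else is essentially a continuity check on the concatenated homotopy and a verification of the stationarity condition on $A$.
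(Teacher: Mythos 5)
Your proposal is correct and follows essentially the same route as the paper's own proof: extract the NDR pair $(u,H)$ from the cofibration via Theorem~\ref{thm:cofib:retract:ndr}, use Theorem~\ref{thm:moulay-bhat} to place a strong deformation retract $V$ of $\mathcal{D}_{\Sigma}(A)$ inside $u^{-1}([0,1))$, and concatenate the two homotopies. The subtlety you flag (that $H$ alone need not retract $u^{-1}([0,1))$ onto $A$ and that $V$ must sit inside this sublevel set) is exactly the point the paper emphasizes, so nothing is missing.
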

\begin{proof}
    We know from Theorem~\ref{thm:moulay-bhat} that $A$ is a weak deformation retract of $\mathcal{D}_{\Sigma}(A)$. Since $\iota_A:A\hookrightarrow \mathcal{D}_{\Sigma}(A)$ is a cofibration, we also know from Theorem~\ref{thm:cofib:retract:ndr} that $(\mathcal{D}_{\Sigma}(A),A)$ is an NDR pair. Then, recall Definition~\ref{def:NDR} and recall the proof of Theorem~\ref{thm:cofib:retract:ndr}. Now, let $W:=u^{-1}([0,1))\supset A$, which is open since $u:\mathcal{D}_{\Sigma}(A)\to [0,1]$ is continuous, and consider the map $H|_{W\times [0,1]}$. It is imperative to remark that this map does \textit{not} provide a strong deformation retract from $W$ onto $A$ in general. The reason why we cannot conclude on the existence of such a map is that we cannot guarantee that throughout the homotopy we have $H(x,s)\in W$ for any $(x,s)\in W\times  [0,1]$. Indeed, we have a map $H|_{W\times [0,1]}:W\times [0,1]\to \mathcal{D}_{\Sigma}(A)\supseteq W$, the codomain cannot be assumed to be $W$. Precisely this detail was already known to Str{\o}m~\textit{cf.}~\cite[Thm.~2]{strom1966note}, see also~\cite[p.~432]{ref:bredon1993topology}.
    Nevertheless, since $A$ is a weak deformation retract of $\mathcal{D}_{\Sigma}(A)$ we know that $W$ contains a set $V\supseteq A$ such that $\mathcal{D}_{\Sigma}(A)$ strongly deformation retracts onto $V$, that is, there is map $\Bar{H}:\mathcal{D}_{\Sigma}(A) \times [0,1]\to \mathcal{D}_{\Sigma}(A)$ such that $\Bar{H}(x,0)=x$ $\forall x\in \mathcal{D}_{\Sigma}(A)$, $\Bar{H}(x,1)\in V$ $\forall x \in \mathcal{D}_{\Sigma}(A)$ and $\bar{H}(x,s)=x$ $\forall (x,s)\in V\times [0,1]$. Hence, the continuous map $\widetilde{H}:\mathcal{D}_{\Sigma}(A)\times [0,1]\to \mathcal{D}_{\Sigma}(A)$ defined by 
\begin{equation*}
    \widetilde{H}(x,s) = \begin{cases}
\bar{H}(x,2s) \quad & s\in [0,\tfrac{1}{2}]\\
{H}\left(\bar{H}(x,1),2s-1\right) \quad & s\in (\tfrac{1}{2},1]
    \end{cases}
\end{equation*}
is a homotopy and provides for the strong deformation retract of $\mathcal{D}_{\Sigma}(A)$ onto $A$. 
\end{proof}

To continue, we need a converse result, we emphasize $\Sigma$. 
\begin{Lem}[$\Rightarrow$~Cofibration]
\label{lem:cofib2}
    Suppose that $\mathsf{M}$ is a locally compact Hausdorff space, that $\Sigma$ satisfies Assumption~\ref{ass:starting:points} and let $A\subseteq \mathsf{M}$ be a compact, invariant, asymptotically stable, attractor with domain of attraction $\mathcal{D}_{\Sigma}(A)$. If $A$ is a strong deformation retract of $\mathcal{D}_{\Sigma}(A)$, then, $\iota_A:A\hookrightarrow \mathcal{D}_{\Sigma}(A)$ is a cofibration.
\end{Lem}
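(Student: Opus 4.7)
The plan is to reduce the claim to verifying the NDR pair condition and then invoke Theorem~\ref{thm:cofib:retract:ndr}. Since $\mathcal{D}_{\Sigma}(A)$ inherits the Hausdorff property from $\mathsf{M}$ and $A$ is compact, $A$ is closed in $\mathcal{D}_{\Sigma}(A)$, so the hypothesis of Theorem~\ref{thm:cofib:retract:ndr} is satisfied and it suffices to produce a pair $(u,H)$ as in Definition~\ref{def:NDR}.

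The homotopy $H$ comes essentially for free. The hypothesis that $A$ is a strong deformation retract of $\mathcal{D}_{\Sigma}(A)$ provides a continuous map $H:\mathcal{D}_{\Sigma}(A)\times [0,1]\to \mathcal{D}_{\Sigma}(A)$ with $H(\cdot,0)=\mathrm{id}_{\mathcal{D}_{\Sigma}(A)}$, with $H(a,s)=a$ for all $(a,s)\in A\times [0,1]$, and—crucially—with $H(x,1)\in A$ for every $x\in \mathcal{D}_{\Sigma}(A)$. Since the terminal condition holds unconditionally, it will in particular hold whenever $u(x)<1$, regardless of which $u$ it is paired with.

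What remains is the construction of a continuous $u:\mathcal{D}_{\Sigma}(A)\to[0,1]$ with $u^{-1}(0)=A$. This is where the topological hypotheses on $\mathsf{M}$, Assumption~\ref{ass:starting:points}, and the dynamical content of asymptotic stability all enter. The plan is to invoke a continuous converse Lyapunov theorem: under local asymptotic stability of a compact set on a locally compact Hausdorff space without start points, a continuous map $V:\mathcal{D}_{\Sigma}(A)\to\mathbb{R}_{\geq 0}$ with $V^{-1}(0)=A$ can be constructed in the spirit of~\cite{ref:auslanderbhatiasiebert1967asymptotic,ref:bhatia1970stability,ref:bhatiahajek2006local,ref:moulay2010topological}, leveraging a fundamental system of forward-invariant open neighbourhoods of $A$. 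Setting $u(x):=V(x)/(1+V(x))$ yields the desired function: $u$ is continuous, takes values in $[0,1)$, and satisfies $u^{-1}(0)=A$.

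Combining the pair $(u,H)$ the NDR axioms are immediate, so Theorem~\ref{thm:cofib:retract:ndr} delivers the conclusion that $\iota_A:A\hookrightarrow \mathcal{D}_{\Sigma}(A)$ is a cofibration. The main obstacle will be the construction of $u$: without metrizability, there is no distance function to fall back on, so the zero-set requirement $u^{-1}(0)=A$ is not obtainable by elementary topological means, and it is specifically the converse Lyapunov construction---for which Assumption~\ref{ass:starting:points}, the compactness of $A$, and the local asymptotic stability are each needed---that resolves this. The remaining steps (extracting $H$, normalising $V$, and invoking Theorem~\ref{thm:cofib:retract:ndr}) are then essentially bookkeeping.
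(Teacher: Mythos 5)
Your proposal is correct and follows essentially the same route as the paper: reduce to the NDR-pair characterization of Theorem~\ref{thm:cofib:retract:ndr}, take the homotopy $H$ directly from the assumed strong deformation retraction, and obtain the function $u$ with $u^{-1}(0)=A$ from a continuous converse Lyapunov theorem for compact asymptotically stable attractors on locally compact Hausdorff spaces without start points (the paper cites~\cite[Thm.~10.6]{ref:bhatiahajek2006local} for exactly this). Your explicit remarks that $A$ is closed in $\mathcal{D}_{\Sigma}(A)$ and the normalisation $u=V/(1+V)$ are details the paper leaves implicit, but the argument is the same.
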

\begin{proof}
We will appeal to the characterization of a cofibration as given by Theorem~\ref{thm:cofib:retract:ndr}.
    As $A$ is a strong deformation retract of $\mathcal{D}_{\Sigma}(A)$ by assumption, then, to conclude on $(\mathcal{D}_{\Sigma}(A),A)$ being an NDR pair, we need to construct the map $u:\mathcal{D}_{\Sigma}(A)\to [0,1]$. As $A$ is a compact, invariant, asymptotically stable attractor, $\mathsf{M}$ is a locally compact Hausdorff space and $\Sigma$ satisfies Assumption~\ref{ass:starting:points}, there is a Lyapunov function of precisely this form~\cite[Thm.~10.6]{ref:bhatiahajek2006local}.  
\end{proof}

\begin{Thm}[Cofibrations]
\label{thm:cofib}
    Suppose that $\mathsf{M}$ is a locally compact Hausdorff space, that $\Sigma$ satisfies Assumption~\ref{ass:starting:points} and let $A\subseteq \mathsf{M}$ be a compact, invariant, asymptotically stable, attractor with domain of attraction $\mathcal{D}_{\Sigma}(A)$. Then, $A$ is a strong deformation retract of $\mathcal{D}_{\Sigma}(A)$ if and only if the inclusion $\iota_A:A\hookrightarrow \mathcal{D}_{\Sigma}(A)$ is a cofibration,
\end{Thm}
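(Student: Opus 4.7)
The plan is simply to package Lemma~\ref{lem:cofib1} and Lemma~\ref{lem:cofib2} into the biconditional. Since the standing hypotheses on $\mathsf{M}$, $\Sigma$ and $A$ in the theorem statement match those required by both lemmas verbatim, the proof reduces to invoking the two lemmas in sequence, one for each direction of the ``if and only if''.

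For the forward implication (strong deformation retract implies cofibration), I would cite Lemma~\ref{lem:cofib2}. The mechanism there is to verify the NDR pair criterion from Theorem~\ref{thm:cofib:retract:ndr}: the homotopy $H$ required in Definition~\ref{def:NDR} is furnished for free by the hypothesized strong deformation retract, and the only remaining item is the continuous function $u:\mathcal{D}_{\Sigma}(A)\to [0,1]$ with $A=u^{-1}(0)$. This function is supplied by the converse Lyapunov theorem, which applies precisely because $\mathsf{M}$ is locally compact Hausdorff, $\Sigma$ satisfies Assumption~\ref{ass:starting:points}, and $A$ is a compact asymptotically stable attractor. For the reverse implication (cofibration implies strong deformation retract), I would invoke Lemma~\ref{lem:cofib1}, whose conceptual ingredients are Theorem~\ref{thm:moulay-bhat} (a weak deformation retract of $\mathcal{D}_{\Sigma}(A)$ onto $A$) and Theorem~\ref{thm:cofib:retract:ndr} (the NDR pair structure extracted from the cofibration). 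These are stitched together by first strongly deformation retracting $\mathcal{D}_{\Sigma}(A)$ onto a set $V\supseteq A$ contained in the open neighborhood $W := u^{-1}([0,1))$, and then following with the NDR homotopy $H$, which when applied to points of $V$ is guaranteed to land in $A$.

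The only genuine subtlety in the whole argument, already observed by Str\o m, is that the NDR homotopy $H$ may exit the neighborhood $W$ and therefore does not by itself give a strong deformation retract of $W$ onto $A$; this is precisely what forces the concatenation construction inside Lemma~\ref{lem:cofib1} and is also the reason one cannot hope to bypass Theorem~\ref{thm:moulay-bhat}. At the level of the theorem itself, once the two lemmas are in hand, I anticipate no further obstacle: the proof is essentially a one-line ``combine Lemma~\ref{lem:cofib1} and Lemma~\ref{lem:cofib2}''.
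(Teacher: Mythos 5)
Your proposal is correct and coincides with the paper's proof, which likewise disposes of the theorem by combining Lemma~\ref{lem:cofib1} and Lemma~\ref{lem:cofib2}, one per direction of the biconditional. Your additional commentary on the Str\o m subtlety accurately reflects the content of those lemmas' proofs but adds nothing beyond them at the level of the theorem itself.
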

\begin{proof}
    The results follow directly by combining Lemma~\ref{lem:cofib1} and Lemma~\ref{lem:cofib2}. 
\end{proof}

\subsection{Examples}

Regarding ramifications of Theorem~\ref{thm:cofib}, we start with a sanity check. We know that for a a linear ODE $\dot{x}=Fx$ with $F\in \mathbb{R}^{n\times n}$ a Hurwitz matrix, $A=\{0\}$ and $\mathcal{D}_{\Sigma}(A)=\mathbb{R}^n$. Hence, we remark that: (i) $\iota_{\{0\}}:\{0\}\hookrightarrow \mathbb{R}^n$ is a cofibration,~\textit{e.g.}, since $(\mathbb{R}^n,\{0\})$ is an NDR pair under the map $x\mapsto u(x):=1-e^{-\langle x,x\rangle}$; and (ii) $\mathbb{R}^n$ strongly deformation retracts onto $0\in \mathbb{R}^n$ via the map $\mathbb{R}^n\times [0,1]\ni (x,s) \mapsto H(x,s):= (1-s)\cdot x$. 

Cofibrations that are not strong deformation retracts are abundant. We start with a well-known example.  
\begin{Ex}[Spheres and disks]
    It can be shown that the inclusion $\iota_{\mathbb{S}^n}:\mathbb{S}^n \hookrightarrow \mathbb{D}^{n+1}$ is a cofibration,~\textit{e.g.}, see the remark on CW complices above. However, $\mathbb{D}^{n+1}$ cannot strongly deformation retract onto $\mathbb{S}^n$ since $\mathbb{S}^n$ and $\mathbb{D}^{n+1}$ are not even homotopy equivalent,~\textit{e.g.}, $\chi(\mathbb{S}^n)\neq \chi(\mathbb{D}^{n+1})$. Hence, $\mathbb{S}^n$ cannot be a global, asymptotically stable, attractor under any semi-dynamical system $\Sigma=(\mathbb{D}^{n+1},\varphi,\mathbb{R}_{\geq 0})$. Concurrently, this example illustrates that the two conditions from Theorem~\ref{thm:cofib} are truly distinct. 
\end{Ex}

We proceed with an example where we obtain topological insights through dynamical systems knowledge.

\begin{Ex}[The Warsaw circle]
\label{ex:Sw}
Let $\mathbb{W}^1:=\{(0,x_2)\in \mathbb{R}^2:x_2\in [-1,1]\}\cup \{(x_1,\sin(x_1^{-1}))\in \mathbb{R}^2:x_1\in (0,\pi^{-1}) \}\cup \{\text{arc from }(0,-1) \text{ to }(\pi^{-1},0)\}$ denote the so-called ``\textit{Warsaw circle}''. The set $\mathbb{W}^1$ is compact, but not a manifold since $\mathbb{W}^1$ is not locally connected.
Hastings showed\footnote{Although a substantial part of the proof is left to the reader.} that $\mathbb{W}^1$ can be rendered a compact, invariant, locally asymptotically stable attractor with an annular neighbourhood $A\subset \mathbb{R}^2$ as $\mathcal{D}_{\Sigma}(\mathbb{W}^1)$~\cite{ref:hastings1979higher}. Although the circle $\mathbb{S}^1\subset \mathbb{R}^2$ and $\mathbb{W}^1\subset \mathbb{R}^2$ are shape equivalent, they are not homotopy equivalent since $\pi_1(\mathbb{W}^1)\simeq 0$ while $\pi_1(\mathbb{S}^1)\simeq \mathbb{Z}$ and the fundamental group $\pi_1(\cdot)$ is homotopy invariant~\cite[Thm.~7.40]{Lee1}. As such, $\mathbb{W}^1$ cannot be a strong deformation retract of any annulus $A\subset \mathbb{R}^2$ it embeds in. 
Then, according to Theorem~\ref{thm:cofib}, $\iota_{\mathbb{W}^1}:\mathbb{W}^1\hookrightarrow A$ cannot be a cofibration.  
\end{Ex}

We recall that for inclusion maps $\iota_A:A\hookrightarrow X$ to not be a cofibration, the pair $(X,A)$ cannot be too regular,~\textit{e.g.}, by Proposition~\ref{prop:CW:cofib} $(X,A)$ cannot be a CW pair. Indeed, by the Whitehead theorem~\cite[Thm.~4.5]{Hatcher}, The Warsaw circle $\mathbb{W}^1$ is not homotopy equivalent to a CW complex. This could also be concluded by observing that CW complices must be locally path-connected. 

Next, we provide an example inspired by an example from~\cite[p.~78--79]{ref:homotopietheorie}. Here we gain dynamical insights via topological knowledge. Before doing so, we recall the difference between the \textit{box} and \textit{product} topology. Let $X_{\alpha}$ be a topological space indexed by $a\in A$, then, if we endow a topological space of the form $X=\prod_{\alpha\in A}X_{\alpha}$ with the product topology, open sets are of the form $\prod_{\alpha\in A}U_{\alpha}$ with $U_{\alpha}$ open in $X_{\alpha}$ and all but finitely many $U_{\alpha}=X_{\alpha}$. The box topology, on the other hand, does not require the last constraint to hold and open sets are simply of the form $\prod_{\alpha\in A}U_{\alpha}$ with $U_{\alpha}$ open in $X_{\alpha}$. When $A$ is finite, these topologies are equivalent, however, the box topology is finer than the product topology in general.

\begin{Ex}[The Tychonoff cube]
\label{ex:tychonoff}
    Let $\Omega > \aleph_0$, then, define the Tychonoff cube as $[0,1]^{\Omega}$, that is, as a uncountably infinite product of the unit interval. Here we endow $[0,1]$ with the standard topology and $[0,1]^{\Omega}$ with the product topology. As such, $[0,1]^{\Omega}$ is a compact Hausdorff space by Tychonoff's theorem~\cite[Thm.~37.3]{munkres2014pearson} and the fact that any product of Hausdorff spaces is Hausdorff~\cite[Thm.~19.4]{munkres2014pearson}. Exploiting compactness, $\{0\}^{\Omega}\in [0,1]^{\Omega}$ can be shown to be a strong deformation retract of $[0,1]^{\Omega}$. Indeed, one can simply use the map $[0,1]^{\Omega}\times [0,1] \ni (x,s)\mapsto H(x,s):=(1-s)\cdot x$, which would be continuous in the product topology, but not in the box topology. Despite the strong deformation retraction, $\iota_{0}:\{0\}^{\Omega}\hookrightarrow [0,1]^{\Omega}$ is not a cofibration since otherwise, by Definition~\ref{def:NDR} and Theorem~\ref{thm:cofib:retract:ndr}, there must be a continuous map $u:[0,1]^{\Omega}\to [0,1]$ such that $\{0\}^{\Omega}=u^{-1}(0)$. However, it can be shown that such a map fails to exist due to $\Omega$ being uncountable~\cite[p.~78--79]{ref:homotopietheorie}, this is where the product topology enters. Hence, Theorem~\ref{thm:cofib} implies that $\{0\}^{\Omega}\in [0,1]^{\Omega}$ cannot be an asymptotically stable attractor, for any continuous---with respect to the product topology on $[0,1]^{\Omega}$ ---semi-dynamical system $\Sigma=([0,1]^{\Omega},\varphi,\mathbb{R}_{\geq 0})$. Note that if $\Omega$ would be finite, then, the map $u$ does exist and can be chosen to be $u:(x_1,\dots,x_{\Omega})\mapsto \max_{i=1,\dots,\Omega} \{x_i\}$. 
\end{Ex}

Note that Example~\ref{ex:tychonoff} is essentially saying that despite seemingly convenient properties of $\Sigma=([0,1]^{\Omega},\varphi,\mathbb{R}_{\geq 0})$, a Lyapunov function fails to exist for $\{0\}^{\Omega}\in [0,1]^{\Omega}$. Concurrently, this example shows that even a strong notion of homotopy equivalence can be insufficient to conclude on the existence of an asymptotically stable attractor. Examples of this form obstruct continuous \textit{stabilization} as well. 

Although, in general, a metrizable space must be merely countably locally finite ($\sigma$-locally finite)~\cite[Thm.~40.3]{munkres2014pearson}, compact metric spaces must be second countable. Hence, $[0,1]^{\Omega>\aleph_0}$ is not metrizable since $\Omega>\aleph_0$ obstructs second countability. Similarly, one can consider the topology of pointwise convergence. Regardless, Example~\ref{ex:tychonoff} illustrates where to look for counterexamples. Indeed, as $[0,1]^{\Omega>\aleph_0}$ is not a normed space and in particular not a Hilbert space, it does not fit into common analysis frameworks, \textit{e.g.}, \cite{ref:curtain2012introduction}. 

It turns out that Theorem~\ref{thm:cofib} covers known results in case $A$ is an embedded submanifold of $\mathsf{M}$. We will assume all our manifolds under consideration to be $C^{\infty}$-smooth and second countable. In that case, let $A\subseteq \mathsf{M}$ be a closed embedded submanifold, then, one appeals to the existence of a \textit{tubular neighbourhood}~\cite[Thm.~6.24]{Lee2} to show that $(\mathcal{D}_{\Sigma}(A),A)$ comprises an NDR pair. Hence, using the following proposition, \cite[Prop.~10]{ref:moulay2010topological} follows as a corollary to Theorem~\ref{thm:cofib}, see also \cite{ref:lin2022wilson}. 
\begin{Prop}[Submanifolds]
\label{prop:submf:cofib}
    Let $A$ be a compact embedded submanifold of $\mathsf{M}$, for $\mathsf{M}$ as in the paragraph above, then, $\iota_A:A\hookrightarrow \mathsf{M}$ is a cofibration. 
\end{Prop}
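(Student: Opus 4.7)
The plan is to exhibit $(\mathsf{M}, A)$ as an NDR pair in the sense of Definition~\ref{def:NDR} and then invoke Theorem~\ref{thm:cofib:retract:ndr}. Since $\mathsf{M}$ is Hausdorff and $A$ is compact, $A$ is closed in $\mathsf{M}$, fulfilling the standing hypothesis of that theorem.

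First, I would endow $\mathsf{M}$ with a Riemannian metric, which is available because $\mathsf{M}$ is smooth and second countable, hence paracompact. Applying the tubular neighborhood theorem~\cite[Thm.~6.24]{Lee2}, one obtains an open neighborhood $U \subseteq \mathsf{M}$ of $A$ together with a diffeomorphism $\Phi:\widetilde{U}\to U$, where $\widetilde{U}\subseteq NA$ is an open neighborhood of the zero section of the normal bundle $NA$ and $\Phi$ restricts to the identity on $A$. By compactness of $A$, one may shrink $\widetilde{U}$ to the open $\rho_0$-disk sub-bundle of $NA$ for some $\rho_0 > 0$, where fiberwise norms $\|\cdot\|$ are taken with respect to the induced metric on $NA$.

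Next I would construct the NDR data $(u, H)$. Fix continuous cutoffs $\alpha, \beta: [0, \rho_0] \to [0, 1]$ with $\beta(0) = 0$, $\beta \equiv 1$ on $[\rho_0/2, \rho_0]$, $\alpha \equiv 1$ on $[0, \rho_0/2]$, and $\alpha \equiv 0$ on an open neighborhood of $\rho_0$. Define
\begin{equation*}
u(x) = \begin{cases} \beta(\|\Phi^{-1}(x)\|), & x \in U,\\ 1, & x \in \mathsf{M} \setminus U, \end{cases}
\end{equation*}
which is continuous because both definitions agree (equal to $1$) on a collar inside $U$, and which satisfies $u^{-1}(0) = A$. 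For the homotopy, put $H(x,s) = x$ for $x \notin U$, and for $v \in \widetilde{U}$ set
\begin{equation*}
H(\Phi(v), s) = \Phi\bigl((1 - s\,\alpha(\|v\|))\,v\bigr),
\end{equation*}
exploiting the fiberwise linear structure of $NA$. Then $H(\cdot, 0) = \mathrm{id}_{\mathsf{M}}$, $H$ fixes $A$ pointwise, and whenever $u(x) < 1$ one has $\|\Phi^{-1}(x)\| < \rho_0/2$, whence $\alpha = 1$ there and $H(x,1) = \Phi(0) \in A$.

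The main obstacle is the global continuity of $H$: this is secured because $\alpha$ vanishes on an open neighborhood of the outer edge of $\widetilde{U}$, so $H(x,s) = x$ already holds throughout a collar of $\partial U$ inside $U$, matching the definition on $\mathsf{M} \setminus U$; the standard gluing lemma for closed covers then delivers continuity globally. With $(u, H)$ in hand, Definition~\ref{def:NDR} is satisfied, and Theorem~\ref{thm:cofib:retract:ndr} yields that $\iota_A:A\hookrightarrow \mathsf{M}$ is a cofibration.
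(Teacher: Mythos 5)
Your proposal takes essentially the same route as the paper, which establishes Proposition~\ref{prop:submf:cofib} precisely by invoking the tubular neighbourhood theorem to exhibit $(\mathsf{M},A)$ as an NDR pair and then applying Theorem~\ref{thm:cofib:retract:ndr}; your write-up simply makes the NDR data $(u,H)$ explicit. Two small touch-ups: additionally require $\beta(t)>0$ for all $t>0$ (the stated conditions alone do not force $u^{-1}(0)=A$), and justify the gluing over the open cover $\{U,\ \mathsf{M}\setminus\Phi(D_{\rho_0/2})\}$, where the closed $\tfrac{\rho_0}{2}$-disk bundle $D_{\rho_0/2}$ is compact (hence its image is closed) because $A$ is compact, rather than over $\{U,\mathsf{M}\setminus U\}$, which is not a closed cover.
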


Our last example pertains to compositions, indicating that Theorem~\ref{thm:cofib} can be applied to subsystems. 
\begin{Ex}[Compositions]
    Cofibrations are closed under composition. Let $i_1:A\to B$ and $i_2:B\to C$ be cofibrations, then, $i:=i_2\circ i_1:A\to C$ is a cofibration. To see this, consider the diagram
    \begin{equation}
    \label{equ:comp}
\begin{tikzcd}[row sep=small,column sep=large]
A\times \{0\} \arrow{d} \arrow[hookrightarrow]{r}                         & A\times [0,1] \arrow{d} \arrow[bend left=49]{rddd}{\alpha_1} &   \\
B\times \{0\} \arrow{d} \arrow[hookrightarrow]{r} \arrow[bend right]{rrdd}{\alpha_2} & B\times [0,1] \arrow{d} \arrow[bend left]{rdd}{\beta_1}  &   \\
C\times \{0\} \arrow[hookrightarrow]{r} \arrow[bend right]{rrd}{\beta_2}            & C\times [0,1] \arrow[rd, dotted]                &   \\
                                               &                                     & Y
\end{tikzcd}.
    \end{equation}
    For any triple $(Y,\alpha_1,\alpha_2)$ there is some appropriate map $\beta_1$ completing~\eqref{equ:diag2} for the cofibration $i_1:A\to B$, but for any triple $(Y,\beta_1,\beta_2)$ the diagram~\eqref{equ:comp} can be completed since $i_2:B\to C$ is a cofibration. 
\end{Ex}

\section{Future work}
Several directions of future work are: (i) discrete systems of the form $\Sigma=(\mathsf{M},\varphi,\mathbb{Z}_{\geq 0})$; (ii) exploiting duality (fibrations); (iii) extensions to other notions of stability; (iv) developing computational tools (homology); (v) relaxing the invariance assumption; and (vi) addressing stochastic systems in a meaningful way. 

\subsection{Closed attractors}
Several results hold when the compactness assumption on $A\subseteq \mathsf{M}$ is relaxed to $A$ being merely \textit{closed},~\textit{e.g.}, see~\cite{ref:lin1996smooth,ref:bhatiahajek2006local}, this generalization is future work. We do emphasize that the generalization is not trivial. Consider for instance~\cite[Ex.~22]{ref:lin2022wilson} with $\mathsf{M}=\mathbb{R}^2\setminus \{(1,0)\}$ and $A=\mathbb{S}^1\setminus\{(1,0)\}\subset \mathsf{M}$. There, the authors construct a vector field on $\mathsf{M}$ such that $A$ is an asymptotically stable attractor, with $\mathcal{D}_{\Sigma}(A)=\mathsf{M}\setminus \{(0,0)\}$. So, although $A$ is an attractor and $\iota_A:A\hookrightarrow \mathcal{D}_{\Sigma}(A)$ is a cofibration due to Proposition~\ref{prop:submf:cofib}, $A$ cannot be a strong deformation retract of $\mathcal{D}_{\Sigma}(A)$ since those sets are not homotopy equivalent. Indeed, $A$ is \textit{not} compact and one cannot simply appeal to Theorem~\ref{thm:moulay-bhat}. The intuition is that for attractors of this form, limits need not be attained and as such stability does not provide for a homotopy between $A$ and $\mathcal{D}_{\Sigma}(A)$. Formally speaking, the proof of Theorem~\ref{thm:moulay-bhat} exploits compactness of the sublevel sets of the corresponding Lyapunov function and implicitly \textit{Cantor's intersection theorem}, which fails to be generally true for closed sets. See also~\cite[Counterex.~1]{ref:yao2023domain}.  

\printbibliography

\end{document}